\documentclass[11pt,reqno]{amsart}
\usepackage{tikz,hyperref}
\usepackage[backend=bibtex, style=alphabetic]{biblatex}
\usepackage{amsfonts,amsmath,amssymb,amsthm}
\usepackage{xcolor}
\usepackage[left=3cm,right=3cm,top=3cm,bottom=3cm]{geometry}
\addbibresource{biblio.bib}
\usepackage{palatino}
\newtheorem{theorem}{Theorem}
\newtheorem{remark}{Remark}
\newtheorem{proposition}{Proposition}
\newtheorem{corollary}{Corollary}
\newtheorem{lemma}{Lemma}

\newtheorem{example}{Example}
\newcommand\bremark{\begin{remark}\begin{upshape}}
\newcommand\eremark{\end{upshape}\end{remark}}
\allowdisplaybreaks
\linespread{1.2}
\title{Flagged Skew Schur Polynomials Twisted By Roots Of Unity}
\author[]{V. Sathish Kumar}
\address{The Institute of Mathematical Sciences, A CI of Homi Bhabha National Institute, Chennai 600113, India}
\email{vsathish@imsc.res.in}
\date{}
\subjclass[]{05E05 (05E10)}
\keywords{Flagged skew Schur polynomials, plethysm, Demazure Characters, Dyck paths, Fuss-Catalan numbers}
\thanks{The author acknowledges partial funding from a DAE Apex Project grant to the Institute of Mathematical Sciences, Chennai.}
\begin{document}
\begin{abstract}
    We generalize a theorem of Littlewood concerning the factorization of Schur polynomials when their variables are twisted by roots of unity. We show that a certain family of flagged skew Schur polynomials admit a similar factorization. These include an interesting family of Demazure characters as a special case.
\end{abstract}
\maketitle
\section{Preliminaries and Background}
A \emph{partition} $\lambda = (\lambda_1, \lambda_2, \cdots)$ is a weakly decreasing sequence of non-negative integers with finitely many non-zero terms (or parts). The \emph{length} of $\lambda$, denoted by $l(\lambda)$ is the largest integer $i$ such that $\lambda_i$ is non-zero.
The \emph{Young diagram} of $\lambda$ is the left and top justified collection of boxes such that the $i^{th}$ row contains $\lambda_i$ boxes. By abuse of notation we denote the Young diagram of $\lambda$ again by $\lambda$. For Young diagrams $\lambda$ and $\mu$ such that $\mu \subset \lambda $ (i.e., $\mu_i \leq \lambda_i \ \text{ } \forall i $), the \emph{skew diagram} $\lambda / \mu$ is obtained by removing the boxes of $\mu$ from that of $\lambda$. (See Figure~\ref{skew tab}).

Given a positive integer $t$, there are $t+1$ partitions associated to $\lambda$ namely: $core_t(\lambda)$, $\lambda^{(0)}$, $\lambda^{(1)}$, $\cdots$, $ \lambda^{(t-1)}$. The partition $core_t(\lambda)$ is the \emph{t-core} of $\lambda$ and the tuple of partitions $\left(\lambda^{(0)}, \lambda^{(1)}, \cdots, \lambda^{(t-1)}\right)$ is the \emph{t-quotient} of $\lambda$. Refer~\cite[Chapter I, \S 1, Example-8]{macdonald} for details.

A \emph{semi-standard skew tableau} of shape $\lambda / \mu$ is a filling of the skew diagram $\lambda / \mu$ that is weakly increasing along the rows (from left to right) and strictly increasing along the columns (from top to bottom). We denote by Tab$(\lambda / \mu)$ the set of all semi-standard skew tableaux of shape $\lambda / \mu$. The \emph{weight} of $T \in$ Tab$(\lambda /\mu)$ is defined as $wt(T) := (t_1, t_2, \cdots)$, where $t_i$ is the number of times $i$ occurs in $T$.

A \emph{flag} $a$ = $(a_1, a_2, \cdots)$ is a weakly increasing sequence of non-negative integers (we will usually write only up to a first few terms that are relevant in the context for the sake of brevity). Let $\lambda / \mu$ be a skew diagram and $a, b$ be flags. The set of all flagged skew tableaux of shape $\lambda / \mu$ with flags $a, b$ is defined by $$\text{Tab}(\lambda /\mu;a,b) := \{T \in \text{Tab}(\lambda /\mu) \;| \; a_i < T_{ij}\leq b_i, \quad \forall i,j\}$$
where, $T_{ij}$ is the entry of the box in the $i^{th}$ row and $j^{th}$ column of $T$.
\begin{figure}
    \centering
    \begin{tikzpicture}[scale=0.7]
        \draw (0,0) -- (0,3);
        \draw (5,5) -- (5,4) -- (1,4);
        \draw (0,0) -- (1,0) -- (1,4);
        \draw (0,1) -- (2,1) -- (2,5);
        \draw (5,5) -- (2,5);
        \draw (0,2) -- (3,2) -- (3,5);
        \draw (0,3) -- (3,3);
        \draw (4,4) -- (4,5);
        \draw (0,0) -- (0,-1) -- (1,-1) -- (1,0);
        \node at (0.5, -0.5) {$6$};
        \node at (0.5, 0.5) {$5$};
        \node at (0.5, 1.5) {$4$};
        \node at (0.5, 2.5) {$3$};
        \node at (1.5, 1.5) {$5$};
        \node at (1.5, 2.5) {$4$};
        \node at (1.5, 3.5) {$2$};
        \node at (2.5, 2.5) {$4$};
        \node at (2.5, 3.5) {$3$};
        \node at (2.5, 4.5) {$1$};
        \node at (3.5, 4.5) {$2$};
        \node at (4.5, 4.5) {$2$};
        \end{tikzpicture}
    \caption{A skew tableau of skew shape $(5, 3, 3, 2, 1, 1) / (2,1)$ with flags $(0, 1, 2, 3,4,4)$ and $(2, 3, 5, 5, 5, 6)$ and weight $(1, 3, 2, 3, 2, 1)$.}
    \label{skew tab}
\end{figure}
The flagged skew Schur polynomial $s_{\lambda / \mu}(a,b)$ is defined by $$s_{\lambda / \mu}(a,b) := \sum _{T \in Tab(\lambda /\mu;a,b)} x^{wt(T)}$$
\bremark
    Flagged skew Schur polynomials  are non-symmetric generalizations of skew Schur polynomials. There are also coincidences between these and several other families of polynomials that we study in algebraic combinatorics namely, \emph{key polynomials} corresponding to $312$-avoiding permutations, \emph{Schubert polynomials} corresponding to vexillary permutations, etc. See~\cite[\S 7]{RS}
\eremark
The following theorem gives a determinantal formula for flagged skew Schur polynomials.
\begin{theorem}
\begin{upshape}\cite[Theorem-3]{Gessel} \cite{Walchs}\end{upshape} Let $\lambda / \mu$ be a skew shape where $\lambda$ has at most n parts. Suppose $a, b$ are flags. Then
\begin{equation}\label{jacobi-trudi}
    s_{\lambda /\mu}(a,b) = det\left(h_{\lambda_i - \mu_j -i +j}(a_j, b_i)\right)_{1\leq i, j \leq n}
\end{equation}
where $h_k(u,v) := h_k(x_{u+1}, x_{u+2}, \cdots, x_{v})$ is the complete symmetric polynomial corresponding to the partition $(k)$ in the variables $\{x_{u+1}, x_{u+2}, \cdots, x_{v}\}$ if $u<v$ and $k>0$. If $k=0$ then $h_{k}(u,v) := 1$ and if $k<0$ then $h_{k}(u,v) := 0$. If $u \geq v$ and $k \neq 0$ then $h_k(u,v) := 0$.
\end{theorem}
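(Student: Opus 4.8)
The plan is to prove the determinantal identity \eqref{jacobi-trudi} with the Lindstr\"om--Gessel--Viennot (LGV) lemma on non-intersecting lattice paths, which is the standard route to Jacobi--Trudi type formulas and adapts to the flagged setting once the flags are built into the path geometry. First I would pad $\lambda$ and $\mu$ with zero parts so that each has exactly $n$ parts, extending $a$ and $b$ accordingly; the extra empty rows affect neither side. Then, in $\mathbb{Z}^2$ with unit steps East $=(1,0)$ and North $=(0,1)$, I would take sources $A_j := (\mu_j - j,\, a_j)$ and sinks $B_i := (\lambda_i - i,\, b_i)$ for $1 \le i, j \le n$. Since $\lambda$ and $\mu$ are weakly decreasing, the $x$-coordinates $\mu_1 - 1 > \mu_2 - 2 > \cdots$ and $\lambda_1 - 1 > \lambda_2 - 2 > \cdots$ are strictly decreasing, and since the flags are weakly increasing the $y$-coordinates are weakly increasing; this ``staircase'' arrangement is precisely the one in which any vertex-disjoint family of monotone paths from $\{A_j\}$ to $\{B_i\}$ is forced to connect $A_i$ to $B_i$ for every $i$, so only the identity permutation contributes to the LGV expansion and no sign issue arises.

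Next I would identify the matrix entries. A monotone East/North path from $A_j$ to $B_i$ makes exactly $(\lambda_i - i) - (\mu_j - j) = \lambda_i - \mu_j - i + j$ East steps; weighting an East step taken at height $k$ by $x_k$ and confining the path to the horizontal strip between the heights of $A_j$ and $B_i$ forces its East-step heights (under the usual shift convention relating lattice heights to tableau entries) to lie in $\{a_j + 1, \dots, b_i\}$, so the generating function of all such paths is exactly $h_{\lambda_i - \mu_j - i + j}(a_j, b_i)$. I would then check that the boundary conventions in the statement agree with the path counts in the degenerate cases: a negative exponent admits no path ($h_{<0}=0$), exponent $0$ admits a single forced vertical path ($h_0=1$), and $u = a_j \ge v = b_i$ leaves no vertical room when the exponent is nonzero ($h_k = 0$). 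Consequently the matrix in \eqref{jacobi-trudi} is the LGV path matrix, and the lemma yields
\[
\det\left(h_{\lambda_i - \mu_j - i + j}(a_j, b_i)\right)_{1 \le i, j \le n} = \sum_{(P_1, \dots, P_n)} \prod_{i=1}^{n} \operatorname{wt}(P_i),
\]
the sum ranging over vertex-disjoint families with $P_i$ running from $A_i$ to $B_i$.

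Finally I would invoke the classical dictionary between such path families and $\mathrm{Tab}(\lambda/\mu)$: reading the heights of the East steps of $P_i$ from left to right produces the $i$-th row of a filling $T$ of $\lambda/\mu$; monotonicity of the paths becomes weak increase along rows, vertex-disjointness becomes strict increase down columns, and the confinement of $P_i$ to the strip between heights $a_i$ and $b_i$ is exactly the flag condition $a_i < T_{ij} \le b_i$. Under this bijection $\prod_i \operatorname{wt}(P_i) = x^{wt(T)}$, so the right-hand side above equals $\sum_{T \in \mathrm{Tab}(\lambda/\mu;a,b)} x^{wt(T)} = s_{\lambda/\mu}(a,b)$.

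The steps I expect to be most delicate are the bookkeeping ones: pinning down the height-versus-entry shift so that the strip bounds become exactly the (strict lower, weak upper) flag inequalities, verifying that the tableau $\leftrightarrow$ path-family correspondence is still a bijection after padding with empty rows and in the presence of flags, and confirming that each degenerate value of $h_k(u,v)$ declared in the statement matches the corresponding path count --- it is exactly these conventions that make the signed sum collapse onto the right-hand side. By contrast, the LGV lemma and the forcing of the identity permutation from the staircase configuration are routine. (One could alternatively deduce \eqref{jacobi-trudi} from the ordinary skew Jacobi--Trudi identity by flag-dependent row and column operations, but the lattice-path argument is more transparent and handles the boundary cases uniformly.)
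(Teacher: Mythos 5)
Your strategy --- realizing the matrix entries as generating functions of North/East lattice paths with sources $A_j=(\mu_j-j,a_j)$ and sinks $B_i=(\lambda_i-i,b_i)$ and applying Lindstr\"om--Gessel--Viennot --- is the standard route to this identity and is essentially that of the references the paper cites (the paper itself gives no proof). When $a_i<b_i$ for all $i$ your outline is sound: the staircase arrangement forces the identity permutation, the one-unit height shift makes the East-step weights range over exactly $\{x_{a_j+1},\dots,x_{b_i}\}$, and vertex-disjointness is equivalent to column-strictness plus the flag bounds.

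The genuine gap is in the degenerate conventions you assert will ``match the corresponding path count.'' When the exponent is $0$ and $a_j\ge b_i$, the statement declares $h_0(a_j,b_i)=1$, but in your model the source then lies strictly above the sink, so there is \emph{no} path and the path count is $0$, not $1$ (there is no ``forced vertical path''). This is not harmless: take $n=2$, $\lambda=(1,1)$, $\mu=(1,0)$, $a=(5,5)$, $b=(1,10)$, which are admissible flags. Both sides of \eqref{jacobi-trudi} equal $x_6+\cdots+x_{10}$, the $(1,1)$ entry being $h_0(5,1)=1$ by convention; but no path leaves $A_1=(0,5)$ (either sink lies to the south or west of it), so the LGV determinant is $0$, and your tableau/path dictionary also fails: the flagged tableau with a single entry in row $2$ exists, while the empty first row --- which constrains no tableau --- demands a path with incompatible endpoints, so no vertex-disjoint family exists. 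Note the mismatch can only occur at an entry with vanishing exponent and $j\le i$, which forces $a_j\ge b_j$ for some row $j$; so your proof is complete whenever $a_i<b_i$ for all $i$, and the remaining cases need a separate reduction. For instance: if $a_{i_0}\ge b_{i_0}$ and $\lambda_{i_0}>\mu_{i_0}$, then every entry $(i,j)$ with $i\le i_0\le j$ vanishes (the exponent there is strictly positive and $a_j\ge b_i$), giving a zero block of total dimension $n+1$, so the determinant is $0$, as is $s_{\lambda/\mu}(a,b)$; if instead $\lambda_{i_0}=\mu_{i_0}$, the same block vanishes except for the $(i_0,i_0)$ entry, which equals $1$, and expanding along it deletes row and column $i_0$, reducing to a smaller instance with the empty row removed. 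With such a supplement (or with the added hypothesis $a_i<b_i$), your LGV argument does prove the theorem.
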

\begin{remark}\begin{upshape}
    If we define $s_{\lambda / \mu}(a,b) :=0$ for partitions $\mu \not \subset\lambda$, then still \eqref{jacobi-trudi} holds.
    \end{upshape}
\end{remark}
Fix positive integers $n$ and $t$ throughout the article. For a partition $\lambda$ with at most $tn$ parts, we define the associated beta sequence by 
\begin{equation*}
    \beta(\lambda) := \left(\lambda_1 +tn-1, \lambda_2 + tn-2, \cdots, \lambda_{tn} +0\right) 
\end{equation*}
For $0\leq r \leq t-1$, let $m_r(\lambda)$ denote the number of parts of $\beta(\lambda)$ that are congruent to $r$ modulo $t$. 
Consider the unique permutation $\sigma_{\lambda} \in \mathfrak S _{tn}$, that regroups the parts of $\beta (\lambda)$ according to the residue classes in ascending order while maintaining the descending order within each group. More precisely,
\begin{equation*}
    \beta(\lambda)_{\sigma _{\lambda} (j)} \equiv r \quad (mod \; t),  \quad  \quad \sum_{i=1} ^{r-1} m_r(\lambda) +1 \leq j \leq \sum_{i=1} ^{r} m_r(\lambda)
\end{equation*}
and $\beta(\lambda)_{\sigma _{\lambda} (j)} > \beta(\lambda)_{\sigma _{\lambda} (k)}$ whenever $\sum_{i=1} ^{r-1} m_r(\lambda) +1 \leq j<k \leq \sum_{i=1} ^{r} m_r(\lambda)$. 

Given a skew shape $\lambda /\mu$ with $l(\lambda) \leq tn$, we define $$\epsilon_t(\lambda / \mu):= \delta_{core_t(\lambda), core_t(\mu)} sgn (\sigma_{\lambda}). sgn (\sigma _{\mu})$$ where, $\delta$ is the Kronecker delta.
\section{Previous results}
Let $R[[x_1, x_2, \cdots]]$ denote the ring of formal power series in the variables $x_1, x_2, \cdots$ with coefficients in a ring $R$. We also consider the set of variables $\{y_1, y_2, \cdots\}$ satisfying $y_i ^t = x_i$. One can identify $R[[x_1, x_2, \cdots]]$ inside $R [[y_1, y_2, \cdots]]$ as $ R[[y_1 ^t, y_2 ^t, \cdots]]$. Let $\Lambda \subset \mathbb{C}[[x_1, x_2, \cdots]]$ denote the ring of symmetric functions with integral coefficients in the variables $\{x_1, x_2, \cdots\}$.  

Let $\omega$ be a primitive $t^{th}$ root of unity. Define maps
$$\phi_t, \psi_t: \mathbb{C}[[x_1, x_2, \cdots]] \longrightarrow \mathbb{C}[[y_1, y_2, \cdots]]$$
by $\phi_t f(x_1, x_2, \cdots) := f(y_1, \omega y_1, \omega ^2 y_1, \cdots, \omega ^{t-1} y_1, \hspace{0.2cm} y_2, \omega y_2, \omega ^2 y_2, \cdots, \omega ^{t-1} y_2, \cdots)$\\ and $\psi_t f(x_1, x_2, \cdots) := f (x_1 ^t, x_2 ^t, \cdots)$.
\bremark
The operator $\psi_t$ when restricted to $\Lambda$ is the plethysm by power-sum symmetric function $p_t$ and $\phi_t$ is the adjoint of $\psi_t$ with respect to the Hall inner product on $\Lambda$.
\eremark
It is well known that
\begin{equation} \label{phi_action} \phi_t\left(h_k (x_1, x_2, \cdots)\right) =  \left\{ 
        \begin{array}{ll}
        h_{k/t}(y_1 ^t, y_2 ^t, \cdots) = h_{k/t}(x_1, x_2, \cdots) & \quad \text{if} \quad t | k  \\
        0 & \quad \text{otherwise}
        \end{array}    \right.
\end{equation} where $h_k$, for $k \geq 0$, is the complete homogeneous symmetric function corresponding to the partition $(k)$. See \cite[Chapter I, \S 5, Example 24 (a)]{macdonald}
\begin{theorem}\label{littlewood_theorem}
    \begin{upshape}\cite[Equation 7.3{;}3]{Little} \cite[Theorem 2]{Dipendra}\end{upshape} Let $\lambda$ be a partition with at most $tn$ parts. Then,
    \begin{equation} \label{little eqn}
        s_{\lambda}\left((X_n)^{1/t}\right) = \epsilon_t(\lambda)\cdot \prod_{k=0} ^{t-1} s_{\lambda^{(k)}}(X_n)
    \end{equation} 
    where $X_n := (x_1, x_2, \cdots, x_n)$  and $(X_n)^{1/t} := \left(y_1, \omega y_1, \cdots, \omega ^{t-1} y_1, \cdots, y_n, \omega y_n, \cdots, \omega ^{t-1} y_n \right)$. 
\end{theorem}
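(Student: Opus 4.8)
The plan is to convert the statement into an identity of determinants using the Jacobi--Trudi formula \eqref{jacobi-trudi} and the homomorphism $\phi_t$, and then to exploit a block decomposition of the resulting matrix dictated by residues modulo $t$, each block being again a Jacobi--Trudi matrix. Concretely, since $l(\lambda)\le tn$, I would write $s_\lambda=\det(h_{\lambda_i-i+j})_{1\le i,j\le tn}$ as an identity in $\Lambda$ (the $\mu=\emptyset$ case of \eqref{jacobi-trudi} with the number of rows taken to be $tn$), apply the ring homomorphism $\phi_t$ together with \eqref{phi_action}, and then specialize the variables beyond the $n$-th to zero, obtaining
\begin{equation*}
  s_\lambda\big((X_n)^{1/t}\big)=\det(M_{ij})_{1\le i,j\le tn},\qquad M_{ij}=\begin{cases} h_{(\lambda_i-i+j)/t}(X_n) & \text{if } t\mid \lambda_i-i+j,\\ 0 & \text{otherwise.}\end{cases}
\end{equation*}
Since $\lambda_i-i\equiv\beta(\lambda)_i\pmod t$, the divisibility condition reads $j\equiv-\beta(\lambda)_i\pmod t$, so each row $i$ of $M$ is supported on the $n$ columns lying in a single residue class modulo $t$.

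The first point to settle is when $\det M$ can be nonzero. A nonzero term $\prod_i M_{i,\tau(i)}$ of the Leibniz expansion forces the permutation $\tau$ to send $\{\,i:\beta(\lambda)_i\equiv r\pmod t\,\}$ into $\{\,j:j\equiv -r\pmod t\,\}$ for every $r$; as the latter sets partition $\{1,\dots,tn\}$ into $t$ classes of size $n$, such a $\tau$ exists precisely when $m_r(\lambda)=n$ for all $r$. By the abacus description of $t$-cores and $t$-quotients (\cite[Chapter I, \S 1, Example 8]{macdonald}), this balance condition is equivalent to $core_t(\lambda)=\emptyset$; when it fails, both $\det M$ and $\epsilon_t(\lambda)$ vanish and \eqref{little eqn} holds trivially. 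So I may assume $core_t(\lambda)=\emptyset$, i.e.\ that each residue class occurs exactly $n$ times among the parts of $\beta(\lambda)$.

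Next I would bring $M$ to block-diagonal form. Permuting the rows by $\sigma_\lambda$ groups them by the residue of $\beta(\lambda)_i$, with $\beta$-values decreasing within each group; permuting the columns by the permutation $\rho$ that lists first the columns $\equiv 0$, then $\equiv -1$, $\dots$, then $\equiv -(t-1)$ (all modulo $t$, each class in increasing order) aligns the support of the $r$-th row block with the $r$-th column block. Thus $\mathrm{sgn}(\sigma_\lambda)\,\mathrm{sgn}(\rho)\,s_\lambda\big((X_n)^{1/t}\big)$ equals the product of the determinants of $t$ blocks, each of size $n$. To identify block $r$: writing the residue-$r$ parts of $\beta(\lambda)$ in decreasing order as $t\gamma^{(r)}_1+r>\cdots>t\gamma^{(r)}_n+r$, the theory of $t$-quotients (\cite[Chapter I, \S 1, Example 8]{macdonald}) identifies $(\gamma^{(r)}_1-(n-1),\dots,\gamma^{(r)}_n-0)$ with $\lambda^{(k)}$ for some bijection $r\mapsto k(r)$ of $\{0,\dots,t-1\}$, and a short computation (for the original indices $i,j$ landing at position $(l,c)$ of block $r$ one has $\lambda_i-i+j=t(\gamma^{(r)}_l-n+c)$) shows that this block has $(l,c)$-entry $h_{\gamma^{(r)}_l-n+c}(X_n)$ --- exactly the Jacobi--Trudi matrix for $s_{\lambda^{(k(r))}}(X_n)$. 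Hence $s_\lambda\big((X_n)^{1/t}\big)=\mathrm{sgn}(\sigma_\lambda)\,\mathrm{sgn}(\rho)\prod_{k=0}^{t-1}s_{\lambda^{(k)}}(X_n)$.

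Finally I would check that $\mathrm{sgn}(\rho)=\mathrm{sgn}(\sigma_\emptyset)$; in fact $\rho=\sigma_\emptyset$, since applying the recipe defining $\sigma_\mu$ to $\mu=\emptyset$, where $\beta(\emptyset)=(tn-1,tn-2,\dots,1,0)$, reproduces exactly the column regrouping $\rho$. As $core_t(\emptyset)=\emptyset$, we have $\epsilon_t(\lambda)=\mathrm{sgn}(\sigma_\lambda)\,\mathrm{sgn}(\sigma_\emptyset)$, so the previous display is \eqref{little eqn}. I expect the main obstacle to lie not in any single deep step --- the analytic content, \eqref{jacobi-trudi} and \eqref{phi_action}, is light --- but in the combinatorial bookkeeping above: matching the residue structure of $\beta(\lambda)$, the abacus and $t$-quotient labelling, and the row/column structure of the Jacobi--Trudi matrix precisely enough to recognize each block as the correct quotient component and to pin down the global sign.
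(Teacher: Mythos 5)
Your proof is correct, and it is essentially the argument the paper itself uses (for the more general Theorem~\ref{main_theorem}): apply $\phi_t$ to the Jacobi--Trudi determinant, observe the entries vanish unless the residues of $\lambda_i-i$ and $-j$ match, regroup rows and columns by residue class at the cost of $\mathrm{sgn}(\sigma_\lambda)\,\mathrm{sgn}(\sigma_{(0)})$, and recognize each diagonal block as the Jacobi--Trudi matrix of a quotient component. Note the paper does not reprove Theorem~\ref{littlewood_theorem} (it is cited from Littlewood), but your blind proof coincides with the paper's method specialized to $\mu=(0)$ with trivial flags, including the correct treatment of the sign via $\rho=\sigma_{(0)}$ and of the vanishing case $core_t(\lambda)\neq\emptyset$.
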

One can re-write~\eqref{little eqn} as $\phi_t \left(s_{\lambda}(x_1, \cdots, x_{tn})\right) = \epsilon_t(\lambda)\cdot \prod_{k=0} ^{t} s_{\lambda^{(k)}}(x_1, x_2, \cdots, x_n)$.\\ Here, $s_{\lambda}(x_1, \cdots, x_{tn})$ denotes the Schur polynomial corresponding to the partition $\lambda$ in the variables $\{x_1, x_2, \cdots, x_{tn}\}$.
\begin{theorem}
    \begin{upshape}\cite[Chapter I, \S 5, Example 24]{macdonald} \end{upshape} \label{macdonald_theorem} 
        If $\mu \subset \lambda$, then
        $$\phi_t(s_{\lambda / \mu}) = \epsilon_t(\lambda /\mu) \prod _{r=0} ^{t-1} s_{\lambda ^{(r)} / \mu^ {(r)}}$$    
\end{theorem}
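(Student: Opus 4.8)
The plan is to carry out the Jacobi--Trudi argument indicated in \cite[Ch.~I, \S5, Ex.~24]{macdonald}. Fix $n$ large enough that $l(\lambda)\le tn$ (nothing below depends on this choice), and start from the skew Jacobi--Trudi identity $s_{\lambda/\mu}=\det\bigl(h_{\lambda_i-\mu_j-i+j}\bigr)_{1\le i,j\le tn}$ (the unbounded-flag case of \eqref{jacobi-trudi}). Since $\phi_t$ is a ring homomorphism it commutes with the determinant, so
\[
  \phi_t(s_{\lambda/\mu})=\det\bigl(\phi_t(h_{\lambda_i-\mu_j-i+j})\bigr)_{1\le i,j\le tn}.
\]
By \eqref{phi_action}, the $(i,j)$ entry equals $h_{(\lambda_i-\mu_j-i+j)/t}$ when $t\mid(\lambda_i-\mu_j-i+j)$ and equals $0$ otherwise; and since $\lambda_i-i\equiv\beta(\lambda)_i$ and $\mu_j-j\equiv\beta(\mu)_j\pmod t$, an entry can be nonzero only when $\beta(\lambda)_i\equiv\beta(\mu)_j\pmod t$.

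Next I would sort rows and columns by residue class. Permuting the rows of this matrix by $\sigma_\lambda$ and the columns by $\sigma_\mu$ changes the determinant by the factor $sgn(\sigma_\lambda)\,sgn(\sigma_\mu)$ and brings it into a shape whose only possibly nonzero entries lie in the diagonal blocks indexed by $r=0,1,\dots,t-1$: block $r$ uses the $m_r(\lambda)$ rows built from the parts of $\beta(\lambda)$ congruent to $r$ (listed in decreasing order) and the $m_r(\mu)$ columns built from the parts of $\beta(\mu)$ congruent to $r$. Expanding $\det$ over $\mathfrak S_{tn}$, any surviving permutation must send the row-index set of block $r$ into the column-index set of block $r$ for each $r$; since the row-index sets (resp.\ the column-index sets) partition $\{1,\dots,tn\}$, a counting argument forces $m_r(\lambda)=m_r(\mu)$ for all $r$. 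By the abacus description of the $t$-core \cite[Ch.~I, \S1, Ex.~8]{macdonald}, this is exactly the condition $core_t(\lambda)=core_t(\mu)$. Hence when $core_t(\lambda)\ne core_t(\mu)$ the determinant vanishes, and so does the right-hand side because $\epsilon_t(\lambda/\mu)=0$; that settles this case.

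When $core_t(\lambda)=core_t(\mu)$, put $m_r:=m_r(\lambda)=m_r(\mu)$; the reorganized matrix is genuinely block diagonal, so its determinant is $\prod_{r=0}^{t-1}\det B^{(r)}$, where $B^{(r)}$ is the $r$-th $m_r\times m_r$ block. The last step is to identify $B^{(r)}$ with the Jacobi--Trudi matrix of $\lambda^{(r)}/\mu^{(r)}$. Writing the parts of $\beta(\lambda)$ congruent to $r$ in decreasing order as $tq^\lambda_1+r>\dots>tq^\lambda_{m_r}+r$, the definition of the $t$-quotient relative to the length-$tn$ beta-set (where $tn\equiv 0\pmod t$) gives $q^\lambda_a=\lambda^{(r)}_a+m_r-a$, and likewise $q^\mu_b=\mu^{(r)}_b+m_r-b$; a one-line computation then shows the $(a,b)$ entry of $B^{(r)}$ is $h_{q^\lambda_a-q^\mu_b}=h_{\lambda^{(r)}_a-\mu^{(r)}_b-a+b}$, so $\det B^{(r)}=s_{\lambda^{(r)}/\mu^{(r)}}$ by Jacobi--Trudi, with the convention of the Remark after \eqref{jacobi-trudi} in force since $\mu^{(r)}\not\subset\lambda^{(r)}$ can happen even when $\mu\subset\lambda$. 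Multiplying the blocks and reinstating the sign yields $\phi_t(s_{\lambda/\mu})=sgn(\sigma_\lambda)\,sgn(\sigma_\mu)\prod_{r=0}^{t-1}s_{\lambda^{(r)}/\mu^{(r)}}=\epsilon_t(\lambda/\mu)\prod_{r=0}^{t-1}s_{\lambda^{(r)}/\mu^{(r)}}$; the special case $\mu=\emptyset$ recovers Theorem~\ref{littlewood_theorem}.

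I expect the main obstacle to be the index bookkeeping in the last paragraph: aligning the reindexing induced by $\sigma_\lambda$ and $\sigma_\mu$ with the precise normalization of the $t$-quotient, and checking that exactly $m_r$ rows and columns survive in block $r$ (and that the $q^\lambda_a$ are indeed the beta-numbers of a genuine partition $\lambda^{(r)}$ with at most $m_r$ parts), so that Jacobi--Trudi applies to $\lambda^{(r)}/\mu^{(r)}$ at the right size. The remaining inputs --- that $\phi_t$ is a ring homomorphism with the action \eqref{phi_action}, and that equality of $t$-cores is equivalent to equality of the runner counts $m_r$ --- are classical and can simply be cited.
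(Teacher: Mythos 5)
Your proposal is correct and takes essentially the same route as the paper: although the paper cites Macdonald for this statement rather than reproving it, its proof of Theorem~\ref{main_theorem} is precisely your argument (apply $\phi_t$ to the Jacobi--Trudi determinant entrywise, sort rows and columns by the residues of $\beta(\lambda)$ and $\beta(\mu)$ modulo $t$ at the cost of $sgn(\sigma_\lambda)sgn(\sigma_\mu)$, observe the block structure forces $m_r(\lambda)=m_r(\mu)$, i.e.\ equal $t$-cores, and identify each block with the Jacobi--Trudi matrix of $\lambda^{(r)}/\mu^{(r)}$ via $(\lambda_i+tn-i-r)/t=\lambda^{(r)}_k+m_r-k$). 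Your bookkeeping of the quotient normalization and of the degenerate cases ($\mu^{(r)}\not\subset\lambda^{(r)}$, unequal cores) matches the paper's conventions, so there is nothing substantive to add.
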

    Here, $s_{\lambda /\mu}$ is the skew Schur function corresponding to the skew shape $\lambda /\mu$. Note that Theorem~\ref{macdonald_theorem} is a generalization of Theorem~\ref{littlewood_theorem} at the level of symmetric functions (infinitely many variables). Theorem~\ref{littlewood_theorem} is further generalized to other \emph{Cartan types} and to the level of \emph{universal characters} in~\cite{nishu1} and \cite{Albion} respectively.
\bremark
    Observe that since $\{s_{\lambda}(x_1, \cdots, x_{tn})\;|\; \; l(\lambda) \leq tn\}$ forms a basis of $\Lambda_{tn}$ (the ring of all symmetric polynomials with integral coefficients in $tn$ variables $\{x_1, \cdots, x_{tn}\}$), Theorem~\ref{littlewood_theorem} implies (among other things) that the image of $\Lambda_{tn}$ is $\Lambda_{n}$.
\eremark
\section{Main Theorem}
Our main theorem describes the action of $\phi_t$ on flagged skew Schur polynomials.
\begin{lemma}
    Let $u, v \in t\mathbb{Z}$ and $k \in \mathbb Z$. Then $$\phi_t\left(h_{k}(u, v)\right) = \left \{ \begin{array}{cc}
        h_{k/t}\left(u/t, v/t\right) & \text{if} \; \;k \in t\mathbb Z \\
        0 & \text{otherwise}
    \end{array}\right.$$
\end{lemma}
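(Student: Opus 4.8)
The plan is to argue via generating functions, reducing the statement to the known identity \eqref{phi_action}. Write $u = ta$ and $v = tb$ with $a,b$ nonnegative integers (the only relevant range); if $a \ge b$ then both sides of the claimed identity are $1$ for $k=0$ and $0$ otherwise, so assume $a < b$. The key observation is that, precisely because $u$ and $v$ are divisible by $t$, the variable set $\{x_{u+1},\dots,x_v\}$ underlying $h_k(u,v)$ is the union of the \emph{full} blocks indexed by $a+1,\dots,b$, so $\phi_t$ carries it to the set $\{\omega^s y_i : 0 \le s \le t-1,\ a+1 \le i \le b\}$ of complete $\omega$-orbits.

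First I would recall the generating identity $\sum_{k\ge 0} h_k(x_{u+1},\dots,x_v)\,z^k = \prod_{i=u+1}^{v}(1-x_i z)^{-1}$, read as an identity of formal power series in an auxiliary variable $z$. Extending $\phi_t$ to such series by letting it fix $z$ and act on coefficients (it is a substitution homomorphism, so this is unambiguous), I get
$$\sum_{k\ge 0}\phi_t\bigl(h_k(u,v)\bigr)\,z^k \;=\; \prod_{i=a+1}^{b}\ \prod_{s=0}^{t-1}\frac{1}{1-\omega^s y_i z} \;=\; \prod_{i=a+1}^{b}\frac{1}{1-y_i^{\,t}z^{\,t}},$$
where the last equality uses $\prod_{s=0}^{t-1}(1-\omega^s w) = 1 - w^t$ with $w = y_i z$, valid since $\omega$ is a primitive $t$-th root of unity.

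Next I would read the right-hand side back through the same generating identity: $\prod_{i=a+1}^{b}(1-y_i^{\,t}z^{\,t})^{-1} = \sum_{m\ge 0} h_m(y_{a+1}^{\,t},\dots,y_b^{\,t})\,z^{tm}$, and under the identification $x_i = y_i^{\,t}$ this is $\sum_{m\ge 0} h_m(x_{a+1},\dots,x_b)\,z^{tm}$. Comparing coefficients of $z^k$ now gives $\phi_t(h_k(u,v))=0$ when $t\nmid k$, and $\phi_t(h_{tm}(u,v)) = h_m(x_{a+1},\dots,x_b) = h_{k/t}(u/t,v/t)$ for $k=tm$, which is exactly the assertion. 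It remains only to reconcile the boundary conventions: for $k=0$ both sides are $1$; for $k<0$ with $t\mid k$ we have $h_k(u,v)=0$ and likewise $h_{k/t}(u/t,v/t)=0$ since $k/t<0$; and the case $a\ge b$ was already settled. (Alternatively one could deduce the lemma directly from \eqref{phi_action} by noting that $\phi_t$ commutes with the specialization sending every variable outside blocks $a+1,\dots,b$ to zero; the generating-function route is more self-contained.)

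The whole argument is bookkeeping, and the only step where the hypothesis $u,v\in t\mathbb{Z}$ is genuinely used — hence essentially the only place where anything could go wrong — is the collapse $\prod_{s=0}^{t-1}(1-\omega^s y_i z) = 1 - y_i^{\,t}z^{\,t}$, which needs each block of $t$ consecutive original variables to occur in its entirety so that a full set of $t$-th roots of unity is available to multiply out; for general $u,v$ one is left with partial blocks and no such factorization. The remaining technical point, that $\phi_t$ commutes with extraction of the coefficient of $z^k$, is immediate from $\phi_t$ being a homomorphism fixing $z$.
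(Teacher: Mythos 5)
Your argument is correct. It differs in presentation from the paper's treatment: the paper disposes of this lemma in one line, ``Follows from \eqref{phi_action}'', i.e.\ it invokes the known infinite-variable identity $\phi_t(h_k)=h_{k/t}$ or $0$ and leaves the finite-block bookkeeping (restricting to the variables $x_{u+1},\dots,x_v$, which is legitimate exactly because $u,v\in t\mathbb{Z}$ makes these variables a union of complete $\omega$-orbits) implicit -- this is precisely the specialization argument you mention parenthetically at the end. Your main route instead reproves the finite-variable statement from scratch via the generating series $\sum_k h_k(x_{u+1},\dots,x_v)z^k=\prod_{i=u+1}^{v}(1-x_iz)^{-1}$ and the factorization $\prod_{s=0}^{t-1}(1-\omega^s w)=1-w^t$, together with a careful check of the degenerate conventions ($k\le 0$, $u\ge v$). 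What your version buys is self-containedness and transparency: it isolates exactly where the hypothesis $u,v\in t\mathbb{Z}$ is used (full blocks, hence full sets of $t$-th roots of unity), a point the paper's citation-style proof glosses over; what the paper's version buys is brevity, since \eqref{phi_action} is standard and the restriction to whole blocks commutes with $\phi_t$ in an obvious way. Either way the content is the same and your proof is sound.
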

\begin{proof}
    Follows from \eqref{phi_action}.
\end{proof}
\begin{theorem}\label{main_theorem}
    Let $\lambda / \mu$ be a skew shape and $a$, $b$ be flags such that $a_i, b_i \in t \mathbb{Z}$. Then,
    $$\phi_t(s_{\lambda / \mu}(a,b)) = \epsilon_t(\lambda / \mu) \prod_{r=0} ^{t-1} s_{\lambda^{(r)}/\mu ^{(r)}}\left(a^{(r)}, b^{(r)}\right)$$
    where, $t \cdot a^{(r)} = \left(a_{\sigma_{\mu}(m_0 + \cdots + m_{r-1}+1)}, \cdots,  a_{\sigma_{\mu}(m_0 + \cdots + m_{r-1}+ m_r)}\right)$ and \\ $t \cdot b^{(r)} = \left(b_{\sigma_{\lambda}(m_0 + \cdots + m_{r-1}+1)}, \cdots,  b_{\sigma_{\lambda}(m_0 + \cdots +m_{r-1}+ m_{r})}\right)$
\end{theorem}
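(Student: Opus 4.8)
The plan is to push $\phi_t$ through the Jacobi--Trudi determinant~\eqref{jacobi-trudi}. Since $\beta(\lambda)$, $\sigma_\lambda$ and $\lambda^{(r)}$ are only defined when $l(\lambda)\le tn$, we assume this throughout (so also $l(\mu)\le tn$), and we use~\eqref{jacobi-trudi} with $tn$ in place of $n$. As $\phi_t$ is induced by a substitution of variables it is a ring homomorphism, so it may be applied entry by entry inside the determinant:
\[
\phi_t\bigl(s_{\lambda/\mu}(a,b)\bigr)=\det\Bigl(\phi_t\bigl(h_{\lambda_i-\mu_j-i+j}(a_j,b_i)\bigr)\Bigr)_{1\le i,j\le tn};
\]
write $\widetilde M$ for the matrix on the right. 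The first step is to rewrite $\lambda_i-\mu_j-i+j=\beta(\lambda)_i-\beta(\mu)_j$, which is immediate from $\beta(\lambda)_i=\lambda_i+tn-i$. Combined with the Lemma and the hypothesis $a_j,b_i\in t\mathbb{Z}$, this shows that $\widetilde M_{ij}=0$ unless $\beta(\lambda)_i\equiv\beta(\mu)_j\pmod{t}$, in which case $\widetilde M_{ij}=h_{(\beta(\lambda)_i-\beta(\mu)_j)/t}\bigl(a_j/t,\,b_i/t\bigr)$.

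I would then split into two cases. If $core_t(\lambda)\ne core_t(\mu)$, then $m_r(\lambda)\ne m_r(\mu)$ for some $r$: by the abacus description of the $t$-core (\cite[Chapter~I, \S1, Example~8]{macdonald}), two partitions whose beta sequences share the common length $tn$ have equal $t$-cores exactly when they have the same number of beta-parts in each residue class modulo $t$. Since $\sum_r m_r(\lambda)=tn=\sum_r m_r(\mu)$, there is then an $r$ with $m_r(\lambda)>m_r(\mu)$, and by the previous paragraph the $m_r(\lambda)$ rows of $\widetilde M$ indexed by $\{i:\beta(\lambda)_i\equiv r\}$ have all their nonzero entries confined to the $m_r(\mu)$ columns $\{j:\beta(\mu)_j\equiv r\}$; these rows are therefore linearly dependent, so $\det\widetilde M=0$. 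As $\epsilon_t(\lambda/\mu)=0$ here, both sides of the claimed identity vanish.

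In the case $core_t(\lambda)=core_t(\mu)$, set $m_r:=m_r(\lambda)=m_r(\mu)$. Reindexing the rows of $\widetilde M$ by $\sigma_\lambda$ and the columns by $\sigma_\mu$ multiplies $\det\widetilde M$ by $sgn(\sigma_\lambda)\,sgn(\sigma_\mu)=\epsilon_t(\lambda/\mu)$ and, by the residue dichotomy above, converts $\widetilde M$ into a block diagonal matrix with blocks $B_0,\dots,B_{t-1}$, where $B_r$ is $m_r\times m_r$. It then remains to identify $\det B_r$ with $s_{\lambda^{(r)}/\mu^{(r)}}(a^{(r)},b^{(r)})$. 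For this I would use the standard dictionary between the beta sequence of a partition and those of its $t$-quotient: if the beta-parts of $\lambda$ congruent to $r$, listed in decreasing order, are $t\,\xi^{(r)}_k(\lambda)+r$ for $1\le k\le m_r$, then $\xi^{(r)}_k(\lambda)=\lambda^{(r)}_k+m_r-k$, and likewise for $\mu$. Since $\sigma_\lambda$ (resp. $\sigma_\mu$) lists precisely these parts in precisely this order, writing $\rho_r=m_0+\cdots+m_{r-1}$ the $(k,k')$ entry of $B_r$ equals
\[
\phi_t\Bigl(h_{t(\xi^{(r)}_k(\lambda)-\xi^{(r)}_{k'}(\mu))}\bigl(a_{\sigma_\mu(\rho_r+k')},\,b_{\sigma_\lambda(\rho_r+k)}\bigr)\Bigr)=h_{\lambda^{(r)}_k-\mu^{(r)}_{k'}-k+k'}\bigl(a^{(r)}_{k'},\,b^{(r)}_k\bigr),
\]
using the Lemma together with the definitions of $a^{(r)},b^{(r)}$ in the statement; note that $a$ is the column flag, so it is regrouped by $\sigma_\mu$, while $b$ is the row flag and is regrouped by $\sigma_\lambda$. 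Hence $B_r$ is exactly the Jacobi--Trudi matrix of $s_{\lambda^{(r)}/\mu^{(r)}}(a^{(r)},b^{(r)})$ (with $m_r$ in the role of $n$, which is legitimate as $l(\lambda^{(r)})\le m_r$), so $\det B_r=s_{\lambda^{(r)}/\mu^{(r)}}(a^{(r)},b^{(r)})$ by~\eqref{jacobi-trudi}. Multiplying the blocks together with the sign yields the theorem.

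The step I expect to be the main obstacle is the index bookkeeping in this last paragraph: one has to carry the sign $sgn(\sigma_\lambda)\,sgn(\sigma_\mu)$ correctly through the two reindexings, keep the row/column roles of $b$ and $a$ straight so that they are regrouped by $\sigma_\lambda$ and $\sigma_\mu$ respectively exactly as written, and check that the offset $m_r-k$ coming from the $t$-quotient dictionary reproduces the $-k+k'$ of Jacobi--Trudi. A smaller point to verify is that the degenerate conventions for $h_k(u,v)$ (for $k\le 0$, and for $u\ge v$) are consistent with the Lemma, so that the entrywise identification of $B_r$ is valid without exception.
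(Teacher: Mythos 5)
Your proposal is correct and follows essentially the same route as the paper: apply $\phi_t$ entrywise in the Jacobi--Trudi determinant, reorder rows and columns by the residue classes of the beta sequences via $\sigma_\lambda,\sigma_\mu$ (producing the sign), observe that mismatched residue counts force the determinant to vanish (accounting for the $t$-core Kronecker delta), and identify each block with the Jacobi--Trudi matrix of $s_{\lambda^{(r)}/\mu^{(r)}}(a^{(r)},b^{(r)})$ via the identity $(\lambda_i+tn-i-r)/t=\lambda^{(r)}_k+m_r-k$. Your added details (the rank argument for non-square blocks and the check of the degenerate $h_k$ conventions) only make explicit what the paper leaves implicit.
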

\begin{proof}
Consider the Jacobi-Trudi determinant in~\eqref{jacobi-trudi}. Rearrange the rows and columns in such a way that the residue classes of $\lambda_i - i$ and $\mu_j - j$ are grouped in ascending order and the values in each group are in descending order. This introduces the sign, $sgn (\sigma_{\lambda}). sgn (\sigma _{\mu})$. 

Now, apply the map $\phi_t$ to the determinant. Then we end up in computing the determinant a diagonal sum of matrices of order $m_r(\lambda) \times m_r(\mu)$ for $0 \leq r \leq t-1$, where $m_r(\lambda)$ (resp. $m_r(\mu)$) is the cardinality of $\{i \hspace{0.2cm}| \hspace{0.2cm} \lambda_i - i \equiv r \hspace{0.2cm}(\text{mod }t)\}$ (resp. $\mu_i - i$). This is because $\phi_t \left(h_{\lambda_i - i - \mu_j +j}(a_j, b_i)\right)$ is non-zero only when $\lambda_i - i \equiv \mu_j - j$ (mod $t$). Since the determinant of a diagonal sum of matrices is non-zero only if the blocks are square matrices, we may therefore conclude that $\phi_t\left(s_{\lambda / \mu}(a,b)\right)= 0$ unless $m_r(\lambda) = m_r(\mu)$ for all $0 \leq r \leq t-1$. This amounts for the $\delta_{core_t(\lambda), core_t(\mu)}$ in $\epsilon_t(\lambda /\mu)$. 

Let us now assume that $m_r(\lambda) = m_r(\mu) =: m_r$ for all $0 \leq r \leq t-1$. The entries of the minor corresponding to the residue class $r$ are given by:
\begin{equation*}
    \phi_t\left(h_{\lambda_i - i -\mu_j +j}(a_j, b_i)\right) = h_{\lambda_k ^{(r)} - \mu_l ^{(r)} -k +l}\left(\frac{a_j}{t},\frac{b_i}{t}\right)
\end{equation*}
for some $k$ and $l$ . This is because, $(\lambda_i + tn - i -r)/t = \lambda_k ^{(r)} + m_r - k$ if $\beta(\lambda) _i$ is the $k^{th}$ largest element among the coordinates of $\beta(\lambda)$ that are congruent to $r$ modulo $t$. Similarly, $(\mu_j + tn - j -r)/t = \mu_l ^{(r)} + m_r - l$ if $\beta(\mu)_j$ is the $l^{th}$ largest among the coordinates of $\beta(\mu)$ that are congruent to $r$ modulo $t$. Therefore by \eqref{jacobi-trudi}, the minor corresponding to the residue class $r$ evaluates to $s_{\lambda^{(r)}/ \mu^{(r)}} (a^{(r)}, b^{(r)})$
where $a^{(r)} = \frac{1}{t}\left(a_{\sigma_{\mu}(\sum _{i=0} ^{r-1} m_{i}+1)}, \cdots,  a_{\sigma_{\mu}(\sum _{i=0} ^{r-1} m_{i} + m_r)}\right)$ and\\ $b^{(r)} = \frac{1}{t}\left(b_{\sigma_{\lambda}(\sum _{i=0} ^{r-1} m_{i}+1)}, \cdots,  b_{\sigma_{\lambda}(\sum _{i=0} ^{r-1} m_{i}+ m_{r})}\right)$.

We have therefore shown that if $m_r(\lambda) = m_r(\mu)$
for all $0 \leq r \leq t-1$, then
\begin{equation*}
    \phi_t(s_{\lambda / \mu}(a,b)) =  sgn (\sigma_{\lambda}). sgn (\sigma _{\mu}) \prod _{r=0} ^{t-1} s_{\lambda^{(r)}/\mu^{(r)}}\left(a^{(r)}, b^{(r)}\right)
\end{equation*}
\end{proof}
\bremark
Observe that the RHS in Theorem~\ref{littlewood_theorem} is non-zero if $\epsilon_t(\lambda)$ is non-zero. But this need not be the case with Theorems~\ref{macdonald_theorem} and \ref{main_theorem}, as 
\begin{enumerate}
    \item It does not follow from $core_t(\lambda) = core_t(\mu)$ that $\mu^{(k)} \subset \lambda^{(k)},\quad 0 \leq k \leq t-1$. (Consider for example $\lambda = (5,3,2,0)$ and $\mu=(4,0,0,0)$. In this case the Jacobi-Trudi determinant \eqref{jacobi-trudi} vanishes for any flags $a$ and $b$.)
    \item Flagged skew Schur polynomial may be zero if the flags are not ``compatible" with the corresponding skew shape.
\end{enumerate}
\eremark
\begin{example} 
    \begin{upshape}
        Let $t=2$. Then, for partitions $\lambda = (5,4,4,3)$, $\mu = (0)$ and flags $a = (0,0,0,0)$, $b = (2,2,4,4)$ we have:
        \begin{itemize}
            \item $s_{\lambda / \mu}(a,b) = s_{\lambda}(a,b) = x_1 ^5 x_2 ^4 x_3 ^3 x_4 ^3 (x_3 + x_4)$
            \item $core_2(\lambda) = core_2(\mu) = (0)$
            \item $\lambda ^{(0)} = (3,3)$ and $\lambda ^{(1)} = (1,1)$
            \item $\mu ^{(0)} = \mu ^{(1)} = (0)$
            \item $a^{(0)} = a^{(1)} = (0)$, $b^{(0)} = (1,1)$ and $b^{(1)} = (3,3)$
            \item $\sigma _{\lambda} = [1,2,3,4]$, $\sigma _{\mu} = [2,4,1,3]$ and hence $\epsilon_2 (\lambda / \mu) = -1$
            \item $\phi_2(s_{\lambda}(a,b)) = s_{\lambda}(a,b)(x_1, -x_1, x_2, -x_2) = 0$
            \item $s_{(3,3)} \left(a^{(0)}, b^{(0)}\right) \cdot s_{(1,1)} \left(a^{(1)},b^{(1)}\right) = 0$, because $s_{(3,3)} \left(a^{(0)},b^{(0)}\right) = 0$. 
        \end{itemize}
    \end{upshape}
\end{example}
\begin{remark}
    \begin{upshape}
        There is a column version of flagged skew Schur polynomials \cite[Theorem~3.5*]{Walchs}. A natural analogue for Theorem~\ref{main_theorem} in this case can be obtained by tracing the same steps.
    \end{upshape}
\end{remark}
\section{The Special Case - Demazure Characters}
If $\lambda$ is a partition with at most $n$ parts and $w \in \mathfrak{S}_n$, let $\kappa_{\lambda, w}$ denote the \emph{key polynomial} corresponding to $\lambda$ and $w$. These are known to be characters of \emph{type-A Demazure modules}. 

A permutation $w = [w_1, w_2, \cdots, w_n]$ (in one-line notation) contains the pattern $312$ if there exists $1 \leq i < j < k \leq n$ such that $w_j < w_k < w_i$. A permutation is \emph{$312$-avoiding} if it does not contain the pattern $312$. 

For $1 \leq k \leq n$, we say $k$ is an \emph{ascent} of a permutation $w$ if $w_{k-1} < w_k$. By convention, $1$ is always an ascent for any permutation $w$.

There is a one-to-one correspondence between $\{a = (a_1, a_2, \cdots, a_n)\;|\;\; a_i \leq a_{i+1}, \text{ } n \geq a_i \geq i\}$ and the set of all $312$-avoiding permutations of $[n]$. The correspondence is described in \cite[section 14]{PS} as follows:
\begin{equation}\label{PS_bijection}
a = (a_1, a_2, \cdots, a_n) \text{ corresponds to } w_a = [w_1, w_2, \cdots, w_n]
\end{equation} 
where $w_i$ is described inductively as $w_1 := a_1$ and $w_k := max\{x: x \leq a_k \text{ and } x \neq w_i \text{ , } 1\leq i \leq k-1\}$.  
\begin{theorem}\label{PS-theorem}
    \cite[Theorem 14.1]{PS} Let $\lambda$ be a partition with at most $n$ parts and $a = (a_1, a_2, \cdots, a_n)$ be a flag such that $a_i \geq i$. Then, $$s_{\lambda/(0)}(0,a) = \kappa_{\lambda, w_a}$$
\end{theorem}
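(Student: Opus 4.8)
The plan is to prove the identity by induction on $\ell(w_a)$, using the description of key polynomials through Demazure operators. Write $\pi_j f := (x_j f - x_{j+1}\, s_j f)/(x_j - x_{j+1})$ for the $j$-th isobaric divided difference operator; these satisfy the braid relations and $\pi_j^2 = \pi_j$, and in the normalisation relevant here $\kappa_{\lambda,\mathrm{id}} = x^\lambda := x_1^{\lambda_1}\cdots x_n^{\lambda_n}$ while $\kappa_{\lambda, s_j w} = \pi_j\,\kappa_{\lambda, w}$ whenever $\ell(s_j w) = \ell(w)+1$. The base case is immediate: under \eqref{PS_bijection} the flag $(1,2,\dots,n)$ corresponds to $w = \mathrm{id}$, and $\mathrm{Tab}(\lambda;0,(1,2,\dots,n))$ has a single element, the tableau whose $i$-th row is filled with $i$'s, so $s_{\lambda/(0)}(0,(1,\dots,n)) = x^\lambda = \kappa_{\lambda,\mathrm{id}}$.

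For the inductive step I would first isolate a purely combinatorial lemma about the bijection \eqref{PS_bijection}. Suppose $a \neq (1,\dots,n)$ and let $i$ be the least index with $a_i > \max(i, a_{i-1})$ (set $a_0 := 0$); let $a^-$ be obtained from $a$ by lowering $a_i$ to $a_i - 1$. Then $a^-$ is again an admissible flag (the inequalities $a^-_{i-1} \le a^-_i$ and $a^-_i \ge i$ survive precisely because $i$ was chosen this way), $\ell(w_{a^-}) = \ell(w_a) - 1$, and $w_{a^-} = s_{a_i - 1}\, w_a$. This is proved by following the recursion that defines $w_a$: one checks that $(w_a)_i$ must equal $a_i$ (were $(w_a)_i < a_i$, lowering $a_i$ would change neither $(w_a)_i$ nor any later entry, contradicting injectivity of $a \mapsto w_a$); that the value $a_i - 1$ does not appear among $(w_a)_1,\dots,(w_a)_{i-1}$ (here one uses the minimality of $i$ together with $a_i \ge i+1$); and finally that, upon lowering $a_i$, the value $a_i$ is released from position $i$ and reappears at some position $p > i$ while $a_i - 1$ moves from $p$ back to $i$. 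The net effect on one-line notation is the transposition of the values $a_i - 1$ and $a_i$, i.e.\ left multiplication by $s_{a_i-1}$, and this lowers the length because the smaller value has moved to the left.

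Granting the lemma, everything reduces to the single tableau identity
\[
s_{\lambda/(0)}(0,a) \;=\; \pi_{a_i - 1}\bigl(s_{\lambda/(0)}(0,a^-)\bigr),
\]
for then $s_{\lambda/(0)}(0,a) = \pi_{a_i-1}\kappa_{\lambda, w_{a^-}} = \kappa_{\lambda, s_{a_i-1}w_{a^-}} = \kappa_{\lambda, w_a}$ by the inductive hypothesis and the Demazure recursion. I expect this identity to be the crux. Note that $\mathrm{Tab}(\lambda;0,a^-) \subseteq \mathrm{Tab}(\lambda;0,a)$ and that the tableaux in the difference are exactly those in which the entry $a_i$ occurs in row $i$: the two flags agree in every other row, and in rows $1,\dots,i$ the value $a_i$ cannot occur under the flag $a^-$ because there the bound is at most $a_i - 1$. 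The cleanest route I know is crystal-theoretic: realise $B(\lambda)$, the type $A_{n-1}$ crystal of highest weight $\lambda$, on semistandard tableaux with entries in $\{1,\dots,n\}$; then for every admissible flag $b$ the set $\mathrm{Tab}(\lambda;0,b)$ is a Demazure subcrystal, and passing from $a^-$ to $a$ is exactly the operation of closing under the root operator $f_{a_i-1}$, whose effect on characters is application of $\pi_{a_i-1}$. Alternatively one can give a direct bijective argument: expand $\pi_{a_i-1}$ monomial-by-monomial on $s_{\lambda/(0)}(0,a^-)$ and match the result with the generating function of the tableaux obtained by inserting or deleting a run of $a_i$'s at the right end of row $i$; this is elementary but needs care, since $s_{\lambda/(0)}(0,a^-)$ need not be symmetric in $x_{a_i-1}$ and $x_{a_i}$. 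Finally, under the standing hypotheses $l(\lambda) \le n$ and $a_i \ge i$ the lemma applies at every stage, so the induction terminates at the base case without obstruction.
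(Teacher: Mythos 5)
First, note that the paper itself offers no proof of this statement: it is quoted from Postnikov--Stanley \cite[Theorem 14.1]{PS}, so you are attempting a genuinely independent argument. Your overall architecture is the standard one (induction on $\ell(w_a)$ via isobaric divided differences), and the purely combinatorial part checks out: with $i$ minimal such that $a_i>\max(i,a_{i-1})$ one indeed has $a_j=j$ and $(w_a)_j=j$ for $j<i$, $(w_a)_i=a_i$, and the greedy recursion shows that $w_{a^-}$ and $w_a$ differ exactly by exchanging the values $a_i-1$ and $a_i$, with the length dropping by one; the base case $a=(1,\dots,n)$ is also fine.

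The genuine gap is the crux identity $s_{\lambda/(0)}(0,a)=\pi_{a_i-1}\bigl(s_{\lambda/(0)}(0,a^-)\bigr)$, which you assert but never establish. The crystal-theoretic justification you invoke rests on two unproved claims, and the first of them --- that $\mathrm{Tab}(\lambda;0,b)$ is a Demazure subcrystal $B_{w_b}(\lambda)$ for every admissible flag $b$ --- is essentially the theorem itself in set-theoretic form, so quoting it as known is circular unless you source it (Lascoux--Sch\"utzenberger, Reiner--Shimozono) and then the whole induction is unnecessary. What your induction actually needs, and what is missing, is the set-level statement that $\mathrm{Tab}(\lambda;0,a)$ is the closure of $\mathrm{Tab}(\lambda;0,a^-)$ under $f_{a_i-1}$: note that the character identity $\mathrm{char}(\mathfrak{F}_j S)=\pi_j\,\mathrm{char}(S)$ is false for arbitrary subsets $S$ of a crystal, so you must carry the stronger hypothesis ``$\mathrm{Tab}(\lambda;0,b)=B_{w_b}(\lambda)$'' through the induction and prove the closure claim by a signature-rule argument (every $T$ with an $a_i$ in row $i$ has $e_{a_i-1}T\neq 0$, and $f_{a_i-1}$ never violates the flag $a$); only then does Kashiwara's $\mathfrak{F}_jB_w(\lambda)=B_{s_jw}(\lambda)$ finish the step. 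Your ``elementary'' alternative is explicitly left uncarried out, and you yourself flag the difficulty (non-symmetry of $s_{\lambda/(0)}(0,a^-)$ in $x_{a_i-1},x_{a_i}$). The cleanest repair within your framework is to cite Wachs \cite{Walchs}, already in this paper's bibliography, where precisely such a symmetrizing-operator recursion for flagged Schur polynomials is proved; with that input your lemma on the bijection \eqref{PS_bijection} does complete the induction.
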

\begin{proposition}\label{tflags_and_permutations}
    The set of flags $\{a = (a_1, a_2, \cdots, a_n): a_i \leq a_{i+1},\text{ } n \geq a_i \geq i \text{ and } a_i \in t\mathbb Z\}$ is in one-to-one correspondence with the set of $312$-avoiding permutations of $[n]$ for which if $k$ is an ascent then $w_k$ is a multiple of $t$.
\end{proposition}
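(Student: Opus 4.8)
The plan is to restrict the bijection \eqref{PS_bijection} to the indicated subset of flags and identify its image. The engine of the argument will be a lemma reading off the ascents of $w_a$ from the flag $a$ alone: for a flag $a=(a_1,\dots,a_n)$ with $n\ge a_i\ge i$ and $a_i\le a_{i+1}$, and for $2\le k\le n$, the index $k$ is an ascent of $w_a$ if and only if $a_{k-1}<a_k$, and in that case $w_k=a_k$ (while $w_1=a_1$ always).

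I would prove this lemma by splitting into the cases $a_k>a_{k-1}$ and $a_k=a_{k-1}$, which are exhaustive since $a$ is weakly increasing. If $a_k>a_{k-1}$, then all earlier values satisfy $w_i\le a_i\le a_{k-1}<a_k$, so $a_k$ is still available at step $k$; maximality forces $w_k=a_k>a_{k-1}\ge w_{k-1}$, an ascent. If $a_k=a_{k-1}$, then the candidate set $\{x\le a_k:\ x\notin\{w_1,\dots,w_{k-1}\}\}$ defining $w_k$ is exactly the candidate set defining $w_{k-1}$ with its maximum $w_{k-1}$ deleted (this element lies in it because $w_{k-1}\le a_{k-1}=a_k$), so $w_k<w_{k-1}$ and $k$ is not an ascent.

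Granting the lemma, the forward inclusion is immediate: if every $a_i\in t\mathbb Z$ and $k$ is an ascent of $w_a$, then $w_k$ equals $a_1$ (if $k=1$) or $a_k$ (if $k\ge 2$), hence lies in $t\mathbb Z$. For the converse I would apply the lemma to the unique flag $a$ with $w_a=w$: it yields $a_1=w_1$ and, for $k\ge 2$, $a_k=w_k$ when $k$ is an ascent and $a_k=a_{k-1}$ otherwise; iterating, $a_k=w_{j(k)}$ where $j(k)$ denotes the largest ascent not exceeding $k$ (which exists because $1$ is always an ascent). Thus if every ascent value of $w$ is a multiple of $t$, then every $a_k$ is a multiple of $t$. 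Since \eqref{PS_bijection} is already a bijection from all flags with $n\ge a_i\ge i$ onto all $312$-avoiding permutations of $[n]$, these two implications show it restricts to a bijection between the two sets in the statement.

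The only genuine obstacle is the lemma; everything else is bookkeeping. Inside the lemma the point needing care is the case $a_k=a_{k-1}$, where one must check that passing from step $k-1$ to step $k$ removes precisely $w_{k-1}$ from the candidate set and hence strictly lowers its maximum. The boundary index $k=1$ is absorbed by the convention that $1$ is always an ascent together with the initialization $w_1=a_1$.
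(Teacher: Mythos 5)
Your proposal is correct and follows exactly the paper's route: the paper's proof consists precisely of the observation that $k$ is an ascent of $w_a$ if and only if $a_{k-1}<a_k$, in which case $w_k=a_k$, and then restricts the bijection \eqref{PS_bijection}. Your case analysis ($a_k>a_{k-1}$ versus $a_k=a_{k-1}$) and the inversion step ($a_k=w_{j(k)}$ for the largest ascent $j(k)\le k$) simply supply the details the paper leaves implicit.
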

\begin{proof}
    The bijection is given by~\eqref{PS_bijection}. The proof follows from the observation that $k$ is an ascent of $w_a$ if and only if $a_{k-1} < a_k$ in which case $w_k = a_k$.
\end{proof}
From Theorems~\ref{main_theorem}, \ref{PS-theorem} and Proposition~\ref{tflags_and_permutations} we have the following corollary.
\begin{corollary}
    Let $\lambda$ be a partition with at most $tn$ parts. If $w \in \mathfrak{S_{tn}}$ is a $312$-avoiding permutation all of whose ascents $k$ are such that $w_k \in t\mathbb Z$, then either $\phi_t(\kappa _{\lambda, w}) = 0$ or there exists permutations $w^{(0)}, w^{(1)}, \cdots, w^{(t-1)} \in \mathfrak{S}_n$ such that
    $$\phi_t(\kappa_{\lambda, w}) = 
    \epsilon_t(\lambda) \prod _{r = 0} ^{t-1} \kappa_{\lambda^{(r)}, w_r}$$
\end{corollary}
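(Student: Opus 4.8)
The plan is to reduce everything to Theorem~\ref{main_theorem} by specialising to the non‑skew case $\mu=(0)$ with lower flag $a=(0,0,\dots,0)$, and to translate both sides into key polynomials via Theorem~\ref{PS-theorem} and the bijection~\eqref{PS_bijection}. First I would use Proposition~\ref{tflags_and_permutations}, with $n$ replaced by $tn$, to write $w=w_b$ for the unique flag $b=(b_1,\dots,b_{tn})$ with $tn\ge b_i\ge i$, $b_i\le b_{i+1}$ and $b_i\in t\mathbb Z$; by Theorem~\ref{PS-theorem} this gives $\kappa_{\lambda,w}=s_{\lambda/(0)}(0,b)$. Since $a=(0,\dots,0)$ and $b$ have all entries in $t\mathbb Z$, Theorem~\ref{main_theorem} applies to $\lambda/(0)$; as $(0)^{(r)}=(0)$ and $m_r((0))=n$ for every $r$, and noting $\epsilon_t(\lambda)=\epsilon_t(\lambda/(0))$, it yields
\[
\phi_t(\kappa_{\lambda,w})=\epsilon_t(\lambda)\prod_{r=0}^{t-1}s_{\lambda^{(r)}/(0)}\bigl(0,b^{(r)}\bigr),
\qquad
b^{(r)}=\tfrac{1}{t}\bigl(b_{\sigma_\lambda(\sum_{i<r}m_i+1)},\dots,b_{\sigma_\lambda(\sum_{i<r}m_i+m_r)}\bigr).
\]
If $core_t(\lambda)\ne(0)$ then $\epsilon_t(\lambda)=0$ and the left‑hand side vanishes by Theorem~\ref{main_theorem}, so the first alternative holds; hence I may assume $core_t(\lambda)=(0)$, so $m_r:=m_r(\lambda)=n$, each $\lambda^{(r)}$ has at most $n$ parts, and (because $\beta(\lambda)$ is strictly decreasing, so $\sigma_\lambda$ is increasing on each residue block and $b$ is weakly increasing) each $b^{(r)}=(b^{(r)}_1,\dots,b^{(r)}_n)$ is an integer vector that is weakly increasing with $b^{(r)}_k\le n$.

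It then remains to identify each factor $s_{\lambda^{(r)}/(0)}(0,b^{(r)})$, which depends only on $b^{(r)}_1,\dots,b^{(r)}_{\ell_r}$ where $\ell_r:=l(\lambda^{(r)})$. Here I split into two cases. If for every $r$ one has $b^{(r)}_k\ge k$ for all $k\le\ell_r$, then I extend $b^{(r)}_1\le\dots\le b^{(r)}_{\ell_r}$ to a genuine flag $\tilde b^{(r)}$ of length $n$ by setting $\tilde b^{(r)}_k:=\max(b^{(r)}_{\ell_r},k)$ for $k>\ell_r$ (and $\tilde b^{(r)}:=(1,2,\dots,n)$ if $\ell_r=0$); this is weakly increasing with $k\le\tilde b^{(r)}_k\le n$, so it corresponds via~\eqref{PS_bijection} to a $312$‑avoiding $w_r\in\mathfrak S_n$, and Theorem~\ref{PS-theorem} gives $s_{\lambda^{(r)}/(0)}(0,b^{(r)})=s_{\lambda^{(r)}/(0)}(0,\tilde b^{(r)})=\kappa_{\lambda^{(r)},w_r}$, which is the asserted product formula (with $w^{(r)}:=w_r$). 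If instead $b^{(r)}_k<k$ for some $r$ and some $k\le\ell_r$, then rows $1,\dots,k$ of $\lambda^{(r)}$ are all non‑empty, so the first column would require $k$ strictly increasing positive entries, the one in row $i$ being $\le b^{(r)}_i\le b^{(r)}_k<k$ — impossible; thus $\text{Tab}(\lambda^{(r)}/(0);0,b^{(r)})=\varnothing$, that factor is $0$, and $\phi_t(\kappa_{\lambda,w})=0$, the first alternative again.

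The one genuinely delicate point is this case analysis: the vectors $b^{(r)}$ coming out of Theorem~\ref{main_theorem} are automatically weakly increasing and bounded by $n$, but need not satisfy $b^{(r)}_k\ge k$, so they are not a priori flags in the sense required by Theorem~\ref{PS-theorem}. One has to see that such a failure is either confined to empty rows of $\lambda^{(r)}$ — hence harmless, after repairing the tail of the flag without changing the polynomial — or else forces $s_{\lambda^{(r)}/(0)}(0,b^{(r)})$, and therefore $\phi_t(\kappa_{\lambda,w})$, to vanish; this dichotomy is exactly what matches the ``either $0$ or a product'' shape of the statement. Everything else is bookkeeping with $\sigma_\lambda$, $\sigma_{(0)}$ and the $t$‑quotient that has already been carried out inside the proof of Theorem~\ref{main_theorem}.
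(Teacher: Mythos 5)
Your argument is correct and follows exactly the route the paper intends (and leaves implicit): encode $w$ as a flag with entries in $t\mathbb Z$ via Proposition~\ref{tflags_and_permutations}, rewrite $\kappa_{\lambda,w}$ as $s_{\lambda/(0)}(0,b)$ by Theorem~\ref{PS-theorem}, apply Theorem~\ref{main_theorem}, and convert the factors back into key polynomials. Your extra care with the degenerate cases — $core_t(\lambda)\neq(0)$, and flags $b^{(r)}$ with $b^{(r)}_k<k$ forcing a vanishing factor (or harmlessly repairable beyond $l(\lambda^{(r)})$) — is precisely the content behind the ``either $\phi_t(\kappa_{\lambda,w})=0$ or'' disjunction that the paper states without proof.
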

The above corollary can be extended to permutations $\sigma_1 \sigma_2 \cdots \sigma_s \in \mathfrak{S}_{tn_1} \times \mathfrak{S} _{tn_2} \times \cdots \times \mathfrak{S}_{tn_s} \subset \mathfrak{S}_{tn}$ where each $\sigma_k$ is a permutation as described in the corollary. See section 5.2 in~\cite{kushwaha2022saturation}.
\section{Enumeration using t-Dyck paths}
The set of all $312$-avoiding permutations in $\mathfrak S _n$ are enumerated by \emph{Dyck paths} of length $2n$. There are $C_n$ many of them, where $C_n$ is the $n^{th}$ Catalan number.

A \emph{$t$-Dyck path} is a lattice path with nodes $\left((x_i, y_i)\right)_{i=0} ^{(t+1)n}$ in $\mathbb N \times \mathbb N$ such that $(x_0, y_0) = (0,0)$, $\left(x_{(t+1)n},y_{(t+1)n}\right)=(tn,tn)$, $x_i \leq y_i$ and $\left(x_{i+1}, y_{i+1}\right) = (x_i,y_i) + (1,0)$ or $(x_i, y_i) + (0,t)$. The $t$-Dyck paths are actually \emph{$(t+1)$-Raney sequences} as in~\cite[361]{Knuth}. The number of $t$-Dyck paths is given by $\binom{(t+1)n}{n}\frac{1}{tn+1}$ which is the $n^{th}$ \emph{Fuss-Catalan number} $C_n ^{t+1}$.

We have the following interesting observation which we intend to record as an aside.
\begin{proposition}\label{t-dyck_paths}
    The set of all $312$-avoiding permutations in $\mathfrak{S}_{tn}$ whose ascents $k$ are such that $w_k$ is a multiple of $t$ is in one-to-one correspondence with the set of all $t$-Dyck paths of length $tn$. 
\end{proposition}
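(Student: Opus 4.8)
\emph{Step 1: reduce to flags.} The plan is to split the claimed bijection into two steps. First, the proof of Proposition~\ref{tflags_and_permutations} applies verbatim with $tn$ in place of $n$, so that the set of $312$-avoiding permutations of $[tn]$ all of whose ascents $k$ satisfy $w_k \in t\mathbb{Z}$ is in bijection with the set of flags
\[
\mathcal{F} := \{\, a = (a_1, \dots, a_{tn}) \;:\; a_1 \le a_2 \le \cdots \le a_{tn},\ \ tn \ge a_i \ge i,\ \ a_i \in t\mathbb{Z} \,\}.
\]
It then remains only to biject $\mathcal{F}$ with the set of $t$-Dyck paths from $(0,0)$ to $(tn,tn)$.

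\emph{Step 2: flags to $t$-Dyck paths.} To a flag $a \in \mathcal{F}$ I would associate the lattice path whose $i$-th right step $R = (1,0)$ is taken at height $a_i$; concretely, the step word
\[
U^{a_1/t}\, R\, U^{(a_2-a_1)/t}\, R \cdots R\, U^{(a_{tn}-a_{tn-1})/t}\, R,
\]
where $U = (0,t)$. Because $a$ is weakly increasing with all entries in $t\mathbb{Z}$, each exponent is a non-negative integer; the word has $tn$ right steps, and since $tn \ge a_{tn} \ge tn$ forces $a_{tn} = tn$, the telescoping sum $a_1/t + \sum_{i=1}^{tn-1}(a_{i+1}-a_i)/t = a_{tn}/t = n$ shows it has exactly $n$ up steps. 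Hence this path runs from $(0,0)$ to $(tn,tn)$ by steps $(1,0)$ and $(0,t)$.

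\emph{Step 3: the path condition and the inverse.} The one thing left to check is $x_i \le y_i$ at every node. The node right after the $i$-th right step is $(i, a_i)$ with $a_i \ge i$; any other node lies inside a block $U^{\ast}$, where (setting $a_0 := 0$ to cover the block preceding the first $R$) the $x$-coordinate is the number $i$ of right steps already taken and the $y$-coordinate is at least $a_i \ge i$. So the image is a $t$-Dyck path. Conversely, from a $t$-Dyck path I read off $a_i := $ the $y$-coordinate at which the $i$-th right step occurs; this $a$ is weakly increasing with entries in $t\mathbb{Z}$ bounded above by $tn$, and $x \le y$ at $(i, a_i)$ gives $a_i \ge i$, so $a \in \mathcal{F}$. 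The two maps are inverse to one another, since the number of up steps of the path strictly between its $i$-th and $(i+1)$-th right steps is exactly $(a_{i+1}-a_i)/t$ (and $a_1/t$ before the first right step, none after the last). Composing this with the bijection of Step~1 proves the proposition.

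I do not foresee a genuine obstacle here: once a $t$-Dyck path is coordinatized by the heights of its right steps, the inequalities defining $\mathcal{F}$ match the inequalities defining a $t$-Dyck path term by term, and everything about $312$-avoidance is already packaged in Proposition~\ref{tflags_and_permutations}. As a consistency check one can set $a_i = t b_i$: then $|\mathcal{F}|$ counts weakly increasing $(b_1, \dots, b_{tn})$ with $b_i \in \{1, \dots, n\}$ and $b_i \ge \lceil i/t \rceil$, a quantity equal to the Fuss--Catalan number $C_n^{t+1}$, in agreement with the known count of $t$-Dyck paths.
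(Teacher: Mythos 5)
Your proposal is correct and follows essentially the same route as the paper: reduce to the flag set via Proposition~\ref{tflags_and_permutations} and then biject flags with $t$-Dyck paths, the flag entry $a_i$ recording the height of the $i$-th horizontal step. The only difference is that you spell out this correspondence and its inverse explicitly, whereas the paper presents it pictorially (shading a triangular grid and rotating) and treats the verification as immediate.
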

\begin{proof}
    It is easy to see that $t$-Dyck paths are in one to one correspondence with the set of flags in Proposition~\ref{tflags_and_permutations}. Consider an equilateral trianglular grid of side length $tn$. We encode a given flag $(a_1, a_2, \cdots, a_{tn})$ in the triangle by shading the $k^{th}$ column up to a height of $tn-a_k$ from the bottom. Now one obtains the required $t$-Dyck path by rotating the triangle so that the bottom-right vertex aligns with $(0,0)$ and the top vertex aligns with $(tn, tn)$.
    \begin{figure}[h]
        \centering
        \begin{tikzpicture}[scale = 0.5]   
    		\draw (0,0) -- (9,0);
	   	    \draw (0.5, 0.5*1.732) -- (8.5, 0.5*1.732);
		      \draw (1, 1.732) -- (8, 1.732);
		      \draw (1.5, 1.5*1.732) -- (7.5, 1.5*1.732);
		      \draw (2,2*1.732) -- (7, 2*1.732);
            \draw (2.5,2.5*1.732) -- (6.5, 2.5*1.732);
            \draw (3,3*1.732) -- (6, 3*1.732);
            \draw (3.5,3.5*1.732) -- (5.5, 3.5*1.732);
            \draw (4,4*1.732) -- (5, 4*1.732);
            \draw (0,0) -- (4.5, 4.5*1.732);
		      \draw (1,0) -- (5, 4*1.732);
		      \draw (2,0) -- (5.5, 3.5*1.732);
		      \draw (3,0) -- (6, 3*1.732);
		      \draw (4,0) -- (6.5, 2.5* 1.732);
		      \draw (5,0) -- (7, 2 *1.732);
            \draw (6,0) -- (7.5, 1.5*1.732);
            \draw (7,0) -- (8, 1*1.732);
            \draw (8,0) -- (8.5, 0.5*1.732);
            \draw (9,0) -- (4.5, 4.5*1.732);
            \draw [ultra thick, draw=black, fill=violet, fill opacity=0.5]
            (0,0) -- (3,3*1.732) -- (4,3*1.732) -- (2.5,1.5*1.732) -- (7.5, 1.5*1.732) -- (6,0) -- cycle;
        \end{tikzpicture}
        \caption{An illustration of the bijection for the flag (3, 6, 6, 6, 6, 6, 9, 9, 9).}
        \label{fig:enter-label}
    \end{figure}
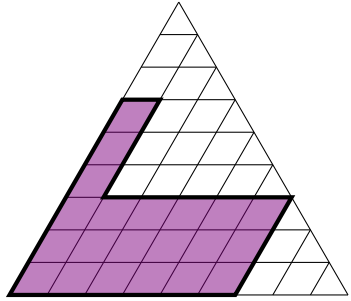
    The proposition thus follows from Proposition~\ref{tflags_and_permutations}.
\end{proof}
\section*{Acknowledgements}
I would like to thank Sankaran Viswanath for his valuable insights, suggestions and corrections. This work also benefited from helpful discussions with K.N. Raghavan and Siddheswar Kundu.
\printbibliography
\end{document}